  \def\Gm{Gamma}%
  \def\SpecG{Spec_Gamma}%
  \def\ModG{Gamma-Mod}%
  \def\Dinfty{D_infty}%
  \def\QCoh{QCoh}%
\theoremstyle{plain}
\newtheorem{theorem}{Theorem}[section]
\newtheorem{proposition}[theorem]{Proposition}
\newtheorem{lemma}[theorem]{Lemma}
\theoremstyle{definition}
\newtheorem{definition}[theorem]{Definition}
\newtheorem{assumption}[theorem]{Assumption}
\newtheorem{remark}[theorem]{Remark}
\newtheorem{example}[theorem]{Example}
\newcommand{\T}{T}
\newcommand{\Gm}{\ensuremath{\Gamma}}
\newcommand{\SpecG}{\ensuremath{\mathrm{Spec}_{\Gm}}}
\newcommand{\ModG}{\ensuremath{\Gm\text{-}\mathrm{Mod}}}
\newcommand{\QCoh}{\ensuremath{\mathrm{QCoh}}}
\newcommand{\Ch}{\ensuremath{\mathrm{Ch}}}
\newcommand{\D}{\ensuremath{\mathrm{D}}}
\newcommand{\Dinfty}{\ensuremath{\mathrm{D}_{\infty}}}
\DeclareMathOperator{\Hom}{Hom}
\DeclareMathOperator{\RHom}{RHom}
\DeclareMathOperator{\Tor}{Tor}
\DeclareMathOperator{\Ext}{Ext}
\DeclareMathOperator{\Nerve}{N}
\providecommand{\sep}{\;}
\begin{document}

\begin{frontmatter}

\title{Derived \Gm-Geometry II: Stable \texorpdfstring{$\infty$}{∞}-Categories of \Gm-Modules, Derived Monoidal Structures, and Obstructions to Binary Shadows}

\author[inst1]{Chandrasekhar Gokavarapu}
\address[inst1]{Department of Mathematics, Government College (Autonomous), Rajahmundry, Andhra Pradesh, India}
\cortext[cor1]{Corresponding author}
\ead{chandrasekhargokavarapu@gmail.com}

\begin{abstract}
Let \(\T\) be a commutative ternary \(\Gm\)-semiring in the sense of the triadic, \(\Gm\)-parametrized
multiplication \(\{a,b,c\}_{\gamma}\).
Building on the affine \(\Gm\)-spectrum \(\SpecG(\T)\), the structure sheaf, and the equivalence
between \(\Gm\)-modules and quasi-coherent \(\Gm\)-sheaves on affine \(\Gm\)-schemes,
we construct and organize the derived formalism at the level of stable \(\infty\)-categories.

Our first contribution is a technically explicit construction of a stable \(\infty\)-category
\(\Dinfty(\T,\Gm)\) enhancing the unbounded derived category of \(\Gm\)-modules, obtained by dg-nerve
and \(\infty\)-localization of chain complexes.
We further explain the derived monoidal structure induced by the ternary \(\Gm\)-tensor product
and the corresponding internal \(\RHom\), under standard exactness/projectivity hypotheses.

Our second contribution is an obstruction theory to \emph{binary reduction}:
we formalize the nonexistence of any conservative ``binary module shadow'' compatible with
the cubic localization calculus intrinsic to ternary \(\Gm\)-semirings.
In particular, any attempt to represent the triadic \(\Gm\)-action by binary scalars forces
\(\Gm\)-mode data to be absorbed into the scalars, hence ceases to be a genuine reduction.

Finally, we give a detailed affine derived equivalence
between derived quasi-coherent \(\Gm\)-sheaves on \(X=\SpecG(\T)\) and \(\Dinfty(\T,\Gm)\),
and we include worked examples illustrating the cubic localization relation and its derived consequences.
\end{abstract}

\begin{keyword}
commutative ternary \Gm-semiring \sep \Gm-module \sep localization \sep quasi-coherent sheaf \sep derived category \sep dg enhancement \sep stable \(\infty\)-category \sep derived tensor product
\end{keyword}

\begin{keyword}
{MSC 2020: 18G80 ; 18N60 ; 13D09 ; 14F05 ; 16Y60}
\end{keyword}

\end{frontmatter}

\tableofcontents

\section{Introduction and positioning}
% =========================================================

\subsection{Scope and relation to the companion paper}
This paper is the \emph{derived} sequel to our affine \(\Gm\)-geometry development:
Zariski-type topology on \(\SpecG(\T)\), the structure sheaf, \(\Gm\)-module sheaves,
and the affine equivalence \(\ModG_{\T}\simeq \QCoh(\SpecG(\T))\).
The editor-in-chief decision on the earlier master manuscript emphasized three points:
(i) motivation must be algebraic and internal, (ii) proofs must not be merely cosmetic adaptations,
and (iii) speculative excursions should be removed.
Accordingly, we concentrate here on the derived and higher-categorical core and
do \emph{not} discuss physics, logic, or programmatic extensions.

\subsection{What is genuinely non-binary?}
A commutative ternary \(\Gm\)-semiring \((\T,+,\{-,-,-\}_{\Gm})\) has no binary multiplication.
This is not a notational choice: it forces a different calculus of fractions.
In particular, localization uses a \emph{cubic} scaling relation (Definition~\ref{def:triadic-eq})
because the binary product \(st\) is \emph{undefined} in \(\T\).
This cubic fraction calculus is the source of the obstruction theorem of \S\ref{sec:obstruction}.

\subsection{Contributions}
\begin{enumerate}[label=\textbf{(\Alph*)}, leftmargin=3.2em]
\item \textbf{Stable \(\infty\)-category.} We construct a stable \(\infty\)-category
\(\Dinfty(\T,\Gm)\) enhancing the triangulated derived category of \(\Gm\)-modules
(\S\ref{sec:dinfty}), using dg enhancements and \(\infty\)-localization
in the sense of \cite{Keller1994,LurieHA,LurieHTT}.
\item \textbf{Derived monoidal structure.} We promote the ternary \(\Gm\)-tensor product
(\S\ref{sec:tensor}) to a derived tensor \(\otimes_{\Gm}^{\mathbb{L}}\) and describe the
resulting monoidal structure on \(\Dinfty(\T,\Gm)\) under explicit hypotheses.
\item \textbf{No conservative binary shadow.} We prove a strengthened nonexistence theorem:
there is no conservative exact functor from \(\Gm\)-modules to binary modules that is
compatible with the \emph{cubic} localization calculus (\S\ref{sec:obstruction}).
\item \textbf{Derived affine equivalence.} For \(X=\SpecG(\T)\), we give a detailed proof
that the affine equivalence \(\ModG_{\T}\simeq \QCoh(X)\) upgrades to an equivalence of
stable \(\infty\)-categories of derived objects (\S\ref{sec:derived-affine}).
\end{enumerate}

\subsection{Related literature (brief, but concrete)}
Our use of stable \(\infty\)-categories follows the now-standard derived framework
for homological algebra and derived geometry \cite{LurieHTT,LurieHA,ToenVezzosi2008}.
We take basic categorical language (monoidal structures, adjunctions, limits/colimits) in the sense of
\cite{MacLane1998}.
The dg enhancement viewpoint is classical \cite{Keller1994}.
For background on derived categories and triangulated techniques see
\cite{Weibel1994,Neeman2001,GelfandManin2003}.

On the algebro-geometric side, quasi-coherent sheaves and affine localization are treated classically in
\cite{Hartshorne1977} and systematically in the Stacks Project \cite{StacksProject};
for sheaf-theoretic and derived-category viewpoints on sheaves, see \cite{KashiwaraSchapira2006}.
Semiring-like geometries appear in various guises; we do not attempt a survey,
but we note that ``geometry without subtraction'' has been studied from several
angles (e.g.\ semiring schemes, blueprints), cf.\ \cite{Golan1999,Lorscheid2012}.
Our setting differs in that multiplication is fundamentally \emph{ternary} and
\(\Gm\)-parametrized.

% =========================================================
\section{Commutative ternary \Gm-semirings and the \Gm-spectrum}\label{sec:basic}
% =========================================================

\begin{definition}[Commutative ternary \Gm-semiring]\label{def:ternaryGammaSemiring}
A \emph{commutative ternary \(\Gm\)-semiring} is a triple \((\T,+,\{-,-,-\}_{\Gm})\) such that:
\begin{enumerate}[label=\textup{(\roman*)}, leftmargin=2.8em]
\item \((\T,+,0)\) is a commutative additive monoid;
\item for each \(\gamma\in\Gm\) there is a ternary operation \(\{-,-,-\}_{\gamma}:\T\times\T\times\T\to\T\),
\((a,b,c)\mapsto \{a,b,c\}_{\gamma}\);
\item \emph{distributivity in each variable}: for all \(a,a',b,b',c,c'\in\T\) and \(\gamma\in\Gm\),
\[
\{a+a',b,c\}_{\gamma}=\{a,b,c\}_{\gamma}+\{a',b,c\}_{\gamma},\qquad
\{a,b+b',c\}_{\gamma}=\{a,b,c\}_{\gamma}+\{a,b',c\}_{\gamma},
\]
\[
\{a,b,c+c'\}_{\gamma}=\{a,b,c\}_{\gamma}+\{a,b,c'\}_{\gamma};
\]
\item \emph{ternary--\(\Gm\) associativity}: for all \(a,b,c,d,e\in\T\) and \(\gamma,\delta\in\Gm\),
\[
\{a,b,\{c,d,e\}_{\gamma}\}_{\delta}=\{\{a,b,c\}_{\gamma},d,e\}_{\delta};
\]
\item \emph{absorption by \(0\)}: \(\{a,0,b\}_{\gamma}=0\) for all \(a,b\in\T\), \(\gamma\in\Gm\);
\item \emph{commutativity in the \(\T\)-variables}: \(\{a,b,c\}_{\gamma}=\{b,a,c\}_{\gamma}=\{a,c,b\}_{\gamma}\)
for all \(a,b,c\in\T\), \(\gamma\in\Gm\).
\end{enumerate}
\end{definition}

\begin{definition}[\Gm-ideal and primeness]
A subset \(P\subsetneq \T\) is a \(\Gm\)-ideal if it is additively closed and
\(\{a,b,c\}_{\gamma}\in P\) whenever one of \(a,b,c\) lies in \(P\).
It is \emph{prime} if \(\{a,b,c\}_{\gamma}\in P\Rightarrow a\in P\) or \(b\in P\) or \(c\in P\).
\end{definition}

Let \(\SpecG(\T)\) denote the set of prime \(\Gm\)-ideals with its Zariski-type topology
generated by basic opens \(D_{\Gm}(a)=\{P\in\SpecG(\T): a\notin P\}\).

% =========================================================
\section{Localization and the cubic calculus of fractions}\label{sec:localization}
% =========================================================

\begin{definition}[Multiplicative system]\label{def:multsys}
A subset \(S\subseteq \T\) is a \emph{multiplicative system} if \(0\notin S\) and
for all \(s_1,s_2,s_3\in S\) and \(\gamma\in\Gm\), the triadic product \(\{s_1,s_2,s_3\}_{\gamma}\)
belongs to \(S\).
\end{definition}

\begin{definition}[Triadic equivalence relation]\label{def:triadic-eq}
Let \(S\) be a multiplicative system in \(\T\).
Define \(\sim\) on \(\T\times S\) by \((a,s)\sim(b,t)\) if there exist \(u\in S\) and
\(\gamma,\delta,\eta\in\Gm\) such that the following \emph{cubic scaling identity} holds:
\[
\{u,a,\{t,t,t\}_{\gamma}\}_{\delta}=\{u,b,\{s,s,s\}_{\eta}\}_{\delta}.
\]
Write the class of \((a,s)\) as \(a/s\), and denote the set of classes by \(S^{-1}\T\).
\end{definition}

\begin{remark}[Why cubes?]
The inclusion of the triadic cubes \(\{t,t,t\}_{\gamma}\) and \(\{s,s,s\}_{\eta}\)
is an algebraic necessity: the binary product \(st\) is not part of the structure
of a ternary \(\Gm\)-semiring, hence cannot be used to express fraction equality.
\end{remark}

\begin{definition}[Localized ternary \Gm-semiring]\label{def:localized-semiring}
Equip \(S^{-1}\T\) with ternary \(\Gm\)-multiplication by
\[
\left\{\frac{a}{s},\frac{b}{t},\frac{c}{v}\right\}_{\lambda}
\ :=\ \frac{\{a,b,c\}_{\lambda}}{\{s,t,v\}_{\lambda}},
\qquad \lambda\in\Gm,
\]
and addition induced from \((\T,+)\).
\end{definition}

\begin{proposition}[Universal property of localization]\label{prop:locUP}
Let \(f:\T\to R\) be a ternary \(\Gm\)-homomorphism such that \(f(S)\) consists of
ternary \(\Gm\)-invertible elements in \(R\).
Then there exists a unique \(\widetilde{f}:S^{-1}\T\to R\) with \(\widetilde{f}(a/s)=f(a)/f(s)\).
\end{proposition}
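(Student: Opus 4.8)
The plan is to define $\widetilde f$ by the only formula compatible with the stated requirement, namely $\widetilde f(a/s):=f(a)/f(s)$, where the right-hand side is the quotient formed in $R$; this is legitimate because $f$ is a ternary $\Gm$-homomorphism and, by hypothesis, $f(s)$ is ternary $\Gm$-invertible. Granting the soundness of the localization construction of Definitions~\ref{def:triadic-eq} and~\ref{def:localized-semiring}, uniqueness is then automatic: every element of $S^{-1}\T$ is represented by a pair $(a,s)\in\T\times S$, so any $\widetilde f$ obeying the prescribed formula is determined everywhere. Thus the real content is (a) that the formula descends to $\sim$-classes, and (b) that the resulting map respects addition and the ternary $\Gm$-multiplication.

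For (a), suppose $(a,s)\sim(b,t)$, witnessed by $u\in S$ and $\gamma,\delta,\eta\in\Gm$ with
\[
\{u,a,\{t,t,t\}_{\gamma}\}_{\delta}=\{u,b,\{s,s,s\}_{\eta}\}_{\delta}.
\]
Applying $f$ and using multiplicativity yields the same cubic identity in $R$, now among $f(u),f(a),f(b),f(s),f(t)$. Since $f(u)$, $f(s)$, $f(t)$ are ternary $\Gm$-invertible, I would cancel $f(u)$ and then the two cubes, concluding $f(a)/f(s)=f(b)/f(t)$, i.e.\ $\widetilde f(a/s)=\widetilde f(b/t)$. I expect this to be the crux of the whole proposition, and would isolate it as a preliminary \textbf{cancellation lemma} with two parts: first, that a ternary $\Gm$-invertible element may be removed from any slot of $\{-,-,-\}_{\lambda}$ (one places an inverse datum in a neighbouring slot and collapses using ternary--$\Gm$ associativity and commutativity in the $\T$-variables, i.e.\ (iv) and (vi) of Definition~\ref{def:ternaryGammaSemiring}); and second --- the delicate point --- that the \emph{cube-twist} built into Definition~\ref{def:triadic-eq} is exactly compensated by the way quotients by ternary $\Gm$-invertible elements are formed in $R$, so that the identity above forces the untwisted equality $f(a)/f(s)=f(b)/f(t)$ rather than a cube-scaled variant. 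Reconciling the three independent $\Gm$-parameters $\gamma,\delta,\eta$ thrown up by the cubic calculus with the $\Gm$-parameters of the inverse data is the part that will need care.

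For (b), the ternary product is immediate from Definition~\ref{def:localized-semiring}:
\[
\widetilde f\bigl(\{a/s,\,b/t,\,c/v\}_{\lambda}\bigr)=\widetilde f\!\left(\frac{\{a,b,c\}_{\lambda}}{\{s,t,v\}_{\lambda}}\right)=\frac{\{f(a),f(b),f(c)\}_{\lambda}}{\{f(s),f(t),f(v)\}_{\lambda}},
\]
which is to be compared with $\{f(a)/f(s),\,f(b)/f(t),\,f(c)/f(v)\}_{\lambda}$; the two coincide by the cancellation lemma read inside $R$, which says that forming quotients by ternary $\Gm$-invertible elements commutes with $\{-,-,-\}_{\lambda}$. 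Addition is analogous but requires first recording how sums are taken in $S^{-1}\T$ --- bringing $a/s$ and $b/t$ to a common ``cubic denominator'' and adding numerators, as forced by Definition~\ref{def:localized-semiring} together with distributivity --- after which $\widetilde f$ respects sums because $f$ is additive and quotients again commute with the ternary product in $R$. Since $\widetilde f$ is then a ternary $\Gm$-homomorphism satisfying $\widetilde f(a/s)=f(a)/f(s)$ by construction, the uniqueness observation of the first paragraph completes the proof.
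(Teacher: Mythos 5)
Your proposal takes a genuinely different (and more concrete) route than the paper's own proof, which is itself only a two-sentence sketch. The paper argues abstractly: show that the relation $\sim$ of Definition~\ref{def:triadic-eq} is the \emph{smallest} equivalence relation on $\T\times S$ that forces all elements of $S$ to become triadically invertible, after which the universal property follows formally (any $f$ inverting $S$ induces a relation containing $\sim$, hence a factorization), and then separately check that the operations on $S^{-1}\T$ are well-defined. You instead write down the candidate $\widetilde f(a/s):=f(a)/f(s)$ directly and verify that it descends to $\sim$-classes and is a homomorphism, isolating a \emph{cancellation lemma} (ternary $\Gm$-invertibles may be struck from a slot, and the cube-twist of Definition~\ref{def:triadic-eq} is absorbed by the quotient operation in $R$) as the technical heart. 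The ``smallest relation'' framing buys conceptual economy and avoids explicitly manipulating the three $\Gm$-parameters; your direct route buys transparency and surfaces exactly where the triadic calculus bites, namely the reconciliation of $\gamma,\delta,\eta$ with the $\Gm$-data carried by the inverses in $R$.

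You are right to flag that reconciliation as the delicate point, and it is worth noting that the difficulty is partly inherited from the paper itself: the phrase ``ternary $\Gm$-invertible element in $R$'' and the quotient notation $f(a)/f(s)$ are nowhere made precise, so the cancellation lemma you propose cannot be completed without first fixing those definitions (which $\Gm$-parameter an inverse is taken relative to, and whether cube-inverses rather than slot-inverses are what is required). The paper's sketch quietly relies on the same unstated conventions. If you supply a precise definition of triadic invertibility that is compatible with the cube-twist in Definition~\ref{def:triadic-eq}, your argument should close; as written, your ``Proof idea'' honestly identifies the gap rather than papering over it, which is the correct posture given the state of the surrounding definitions.
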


\begin{proof}
This is proved by showing that \(\sim\) is the smallest relation forcing all elements of \(S\)
to become invertible in the triadic sense and then verifying well-definedness of operations.
The nontrivial input is compatibility with the cubic scaling relation in
Definition~\ref{def:triadic-eq}.
\end{proof}

\subsection{Localization of \Gm-modules via the universal property}\label{subsec:moduleloc}
Fix a multiplicative system \(S\subseteq \T\).
There is a canonical map \(\ell:\T\to S^{-1}\T\) and hence a scalar-extension
functor on \(\Gm\)-modules (whenever scalar restriction is defined).

\begin{definition}[Localized module by scalar extension]\label{def:locmodule_tensor}
Assume the tensor product \(\otimes_{\Gm}\) exists in \(\ModG_{\T}^{\mathrm{gp}}\).
For \(M\in \ModG_{\T}^{\mathrm{gp}}\), define its localization by
\[
S^{-1}M\ :=\ (S^{-1}\T)\otimes_{\Gm} M,
\]
viewed as a \(\Gm\)-module over \(S^{-1}\T\).
\end{definition}

\begin{proposition}[Universal property of module localization]\label{prop:locM_UP}
For \(M\in\ModG_{\T}^{\mathrm{gp}}\), the map \(M\to S^{-1}M\) induced by \(m\mapsto (1/1)\otimes m\)
is initial among maps \(M\to N\) into \(S^{-1}\T\)-\(\Gm\)-modules \(N\) such that all \(s\in S\)
act invertibly in the localized triadic sense.
\end{proposition}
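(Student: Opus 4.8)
The plan is to reduce the universal property of module localization to the universal property of the localized semiring (Proposition~\ref{prop:locUP}) together with the defining adjunction for the tensor product $\otimes_{\Gm}$, exactly as one does in the classical associative-binary case, but tracking the $\Gm$-parametrization carefully. First I would make precise the target category: for a $\Gm$-homomorphism $\ell\colon\T\to S^{-1}\T$ and an $S^{-1}\T$-$\Gm$-module $N$, restriction along $\ell$ gives an $\T$-$\Gm$-module $\ell^{*}N$, and ``all $s\in S$ act invertibly in the localized triadic sense'' is automatic once $N$ is genuinely an $S^{-1}\T$-module, since $\ell(s)=s/1$ is triadic-invertible in $S^{-1}\T$ by the very construction in Definition~\ref{def:localized-semiring}. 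So the assertion to prove is that the natural map
\[
\Hom_{S^{-1}\T\text{-}\Gm}(S^{-1}M,\,N)\ \longrightarrow\ \Hom_{\T\text{-}\Gm}(M,\,\ell^{*}N),\qquad \varphi\longmapsto \varphi\circ(m\mapsto (1/1)\otimes m),
\]
is a bijection.

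The key steps, in order: (1) Invoke the defining property of $S^{-1}M=(S^{-1}\T)\otimes_{\Gm}M$ from Definition~\ref{def:locmodule_tensor}, namely that $\otimes_{\Gm}$ is left adjoint to restriction of scalars along $\ell$; this gives $\Hom_{S^{-1}\T\text{-}\Gm}(S^{-1}M,N)\cong\Hom_{\T\text{-}\Gm}(M,\ell^{*}N)$ formally, provided the tensor--hom adjunction has been established in the ternary $\Gm$-setting (which is the standing hypothesis ``$\otimes_{\Gm}$ exists in $\ModG_{\T}^{\mathrm{gp}}$''). (2) Identify the unit of this adjunction with the map $m\mapsto(1/1)\otimes m$ by a direct computation, so that the abstract bijection is implemented by the concrete formula in the statement. (3) Conversely, given any $\T$-$\Gm$-map $g\colon M\to N$ with $N$ an $S^{-1}\T$-module on which $S$ acts triadic-invertibly, extend $g$ to $\bar g\colon S^{-1}M\to N$ by $\bar g((a/s)\otimes m)=\{a/s,\,1/1,\,1/1\}$-type scaling of $g(m)$ against the inverse of $\ell(s)$; well-definedness here is exactly where the cubic scaling relation of Definition~\ref{def:triadic-eq} enters, and it must be checked using distributivity (iii) and ternary--$\Gm$ associativity (iv) of Definition~\ref{def:ternaryGammaSemiring}. (4) Verify that $\bar g$ is $S^{-1}\T$-$\Gm$-linear using Definition~\ref{def:localized-semiring} for the multiplication on $S^{-1}\T$, and that $\varphi\mapsto\varphi\circ\text{unit}$ and $g\mapsto\bar g$ are mutually inverse.

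The main obstacle is step (3): checking that $\bar g$ is well defined on equivalence classes. Unlike the binary case, ``dividing by $s$'' is not literal multiplication by $s^{-1}$; one must express the action of the triadic inverse of $\ell(s)$ and show that if $(a,s)\sim(b,t)$ via a witness $u\in S$ and parameters $\gamma,\delta,\eta$, then the two candidate values of $\bar g$ agree. This forces one to push $g$ through the identity $\{u,a,\{t,t,t\}_{\gamma}\}_{\delta}=\{u,b,\{s,s,s\}_{\eta}\}_{\delta}$, apply $\T$-$\Gm$-linearity of $g$, and then cancel the invertible factors $\ell(u),\ell(s),\ell(t)$ in $N$ — the cancellation being legitimate precisely because these act invertibly in the localized triadic sense. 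A subsidiary but real annoyance is bookkeeping the $\Gm$-parameters: the triadic inverse may only be available ``up to a choice of $\gamma$,'' so one should either fix such choices compatibly or phrase the argument so that the final equality is independent of them, which is ultimately guaranteed by the uniqueness clause in Proposition~\ref{prop:locUP} applied to $S^{-1}\T$ acting on $N$. Once well-definedness is secured, the remaining verifications ($\T$-linearity of the unit, $S^{-1}\T$-linearity of $\bar g$, and the two round-trip identities) are routine diagram chases and can be stated without detailed computation.
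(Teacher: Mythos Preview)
Your proposal is correct and follows essentially the same extension-of-scalars strategy as the paper: reduce to the universal property of the tensor product together with the fact that $S^{-1}\T$ already inverts $S$ (Proposition~\ref{prop:locUP}). The paper's proof is terser---it invokes Proposition~\ref{prop:tensorUP} directly (maps out of $(S^{-1}\T)\otimes_{\Gm}M$ are balanced biadditive maps) rather than phrasing it as a restriction/extension adjunction, so the well-definedness check of your step~(3) is absorbed into the universal property of $\otimes_{\Gm}$ and never unpacked explicitly; your steps~(3)--(4) effectively re-derive that universal property by hand, which is redundant with step~(1) but not wrong.
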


\begin{proof}
This is an extension-of-scalars argument using Proposition~\ref{prop:tensorUP}:
maps out of \((S^{-1}\T)\otimes_{\Gm} M\) correspond to balanced biadditive maps
\((S^{-1}\T)\times M\to N\).
The balancing forces the \(S\)-action to become invertible because \(S^{-1}\T\) already inverts \(S\)
by Proposition~\ref{prop:locUP}.
\end{proof}

\begin{remark}
Definition~\ref{def:locmodule_tensor} avoids guessing an explicit ``module fraction'' equivalence relation.
It is compatible with the basic-open section description: for \(a\in\T\), let \(S(a)\) be the multiplicative system
generated by \(a\), and set \(M_a:=S(a)^{-1}M\).
\end{remark}

% =========================================================
\section{\Gm-modules, localization, and quasi-coherent sheaves}\label{sec:sheaves}
% =========================================================

\subsection{\Gm-modules}

\begin{definition}[\Gm-module]\label{def:GammaModule}
Let \(\T\) be a commutative ternary \(\Gm\)-semiring.
A \emph{(left) \(\Gm\)-module} over \(\T\) is a commutative additive monoid \((M,+,0)\)
together with an action
\[
\T\times \T\times M\times \Gm \longrightarrow M,\qquad (a,b,m,\gamma)\longmapsto \{a,b,m\}_{\gamma},
\]
satisfying:
\begin{enumerate}[label=\textup{(\roman*)}, leftmargin=2.8em]
\item distributivity in all variables (in particular, additivity in \(m\));
\item \emph{ternary associativity compatibility}: for all \(a,b,c,d\in\T\), \(m\in M\), and \(\gamma,\delta\in\Gm\),
\[
\{a,b,\{c,d,m\}_{\gamma}\}_{\delta}=\{\{a,b,c\}_{\gamma},d,m\}_{\delta};
\]
\item absorption by \(0\) in the scalar variables (induced from Definition~\ref{def:ternaryGammaSemiring}).
\end{enumerate}
A morphism of \(\Gm\)-modules is an additive map compatible with the action.
\end{definition}

\subsection{Localization of modules}
Given a multiplicative system \(S\subseteq \T\), define \(S^{-1}M\) by the same fraction
calculus, now with pairs \((m,s)\) and cubic scaling relations imposed so that
\(m/s\) behaves as a localized section over \(D_{\Gm}(s)\).

\subsection{Affine equivalence and a quasilocal basis}
Let \(X=\SpecG(\T)\).
For a \(\Gm\)-module \(M\) define a presheaf on basic opens by
\[
\widetilde{M}\big(D_{\Gm}(a)\big):=M_a,
\]
with restriction maps induced by localization.
By sheafification one obtains a sheaf of \(\Gm\)-modules on \(X\), still denoted \(\widetilde{M}\).

\begin{theorem}[Affine \(\Gm\)-module / quasi-coherent equivalence]\label{thm:affine_equiv}
For \(X=\SpecG(\T)\), the assignment \(M\mapsto \widetilde{M}\) defines an equivalence of categories
\[
\Phi:\ModG_{\T}\xrightarrow{\ \sim\ }\QCoh(X).
\]
\end{theorem}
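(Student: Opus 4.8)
The plan is to construct a quasi-inverse $\Psi:\QCoh(X)\to\ModG_{\T}$ by global sections, $\Psi(\mathcal{F}) = \mathcal{F}(X) = \Gamma(X,\mathcal{F})$, equipped with its natural $\T$-action via $\T = \Gamma(X,\mathcal{O}_X)$ (using the structure-sheaf identification from the companion paper), and then to verify the two unit/counit natural isomorphisms. First I would check that $\Phi$ is well-defined on morphisms and functorial: a $\Gm$-module map $f:M\to N$ induces compatible maps $M_a\to N_a$ on each basic open by the universal property of module localization (Proposition~\ref{prop:locM_UP}), hence a morphism of presheaves and, after sheafification, a morphism $\widetilde{f}:\widetilde{M}\to\widetilde{N}$. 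Next I would record that $\widetilde{M}$ is indeed quasi-coherent: on basic opens it is by construction the localization $M_a = S(a)^{-1}M = (S(a)^{-1}\T)\otimes_{\Gm}M$ (Definition~\ref{def:locmodule_tensor}), which is exactly the local model $\widetilde{\mathcal{O}_X|_{D_{\Gm}(a)}}$-module presentation required, so quasi-coherence is witnessed on the basic-open cover.

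The core computation is the pair of natural isomorphisms. For the counit $\Phi\Psi\Rightarrow\mathrm{id}$, given $\mathcal{F}\in\QCoh(X)$ with $M:=\Gamma(X,\mathcal{F})$, I must show the natural map $\widetilde{M}\to\mathcal{F}$ is an isomorphism; since both sides are sheaves it suffices to check on the basic opens $D_{\Gm}(a)$, i.e.\ to show $M_a\xrightarrow{\sim}\mathcal{F}(D_{\Gm}(a))$. This is where quasi-coherence of $\mathcal{F}$ is used: locally $\mathcal{F}$ is a cokernel of a map of free $\mathcal{O}_X$-modules, and one chases the localization functor through that presentation, using that localization is (right-)exact and commutes with the relevant colimits — this in turn relies on the tensor-product universal property (Proposition~\ref{prop:tensorUP}) and on $S(a)^{-1}\T$ being flat/exact enough over $\T$ in the $\Gm$-sense. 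For the unit $\mathrm{id}\Rightarrow\Psi\Phi$, I would show $M\xrightarrow{\sim}\Gamma(X,\widetilde{M})$; the map is $m\mapsto (1/1)\otimes m$ from Proposition~\ref{prop:locM_UP}, and I would prove it is an isomorphism by a sheaf-condition (equalizer) argument over a finite basic-open cover $X = \bigcup D_{\Gm}(a_i)$, reducing injectivity and surjectivity to the ``cubic partition of unity'' manipulation: elements of $\T$ not in any prime together generate the unit in the triadic sense, which lets one glue compatible local fractions $m_i/a_i^{(3)}$ into a global section using the cubic scaling identity of Definition~\ref{def:triadic-eq}.

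The main obstacle I anticipate is precisely this gluing/partition-of-unity step, because the usual binary Shapiro-style argument ($1 = \sum r_i s_i$, multiply and sum) is unavailable: one only has the ternary $\{-,-,-\}_{\gamma}$, so ``unit'' must be expressed as a sum of \emph{triadic} products $\sum_i \{r_i, r_i', a_i\}_{\gamma_i}$ (or cubes thereof), and all cancellation of denominators when comparing $m_i/a_i$ and $m_j/a_j$ on the overlap $D_{\Gm}(a_i)\cap D_{\Gm}(a_j) = D_{\Gm}(\{a_i,a_i,a_j\}_\gamma)$ must route through the cubic equivalence relation rather than through a binary common denominator. I would isolate this as a lemma: for a cover of $\SpecG(\T)$ by finitely many $D_{\Gm}(a_i)$, there exist $\gamma_i\in\Gm$ and $b_i\in\T$ with a controlled triadic combination of the $\{a_i,a_i,a_i\}_{\gamma_i}$ equal to a unit-like element, together with a bookkeeping lemma tracking how the $\Gm$-modes $\gamma_i$ propagate through iterated localizations. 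The other, milder, technical point is verifying that sheafification does not damage the basic-open values — i.e.\ that $\widetilde{M}$ is already a sheaf on the basis, which again reduces to the same exactness-of-localization and cubic-gluing input and so is subsumed by the lemma above. Once that lemma is in hand, the remaining verifications (compatibility of the two natural transformations, triangle identities) are formal.
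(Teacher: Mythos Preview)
Your proposal is correct and follows essentially the same strategy as the paper's own (sketch) proof: the paper organizes the argument as fully faithful plus essentially surjective, while you frame it via an explicit quasi-inverse $\Psi=\Gamma(X,-)$ with unit/counit isomorphisms, but the substantive content is identical --- both directions reduce to the identification $\Gamma(X,\widetilde{M})\cong M$ and to reconstructing a quasi-coherent $\mathcal{F}$ from its global sections using local free presentations and the cubic fraction relation on overlaps. If anything, your outline is more explicit than the paper about the key technical input (the ``cubic partition of unity'' and the description of $D_{\Gm}(a_i)\cap D_{\Gm}(a_j)$ needed for gluing), which the paper only gestures at by remarking that gluing on overlaps uses Definition~\ref{def:triadic-eq}.
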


\begin{proof}[Expanded proof sketch]
\textbf{(1) Fully faithful.}
For \(M,N\in\ModG_{\T}\), a morphism \(\widetilde{M}\to\widetilde{N}\) is determined by its effect on
global sections because \(X\) is covered by basic opens and restrictions are computed by localization.
Using that \(\Gamma(X,\widetilde{M})\cong M\) (proved by unwinding the fraction description on basic opens),
one obtains
\(\Hom_{\QCoh(X)}(\widetilde{M},\widetilde{N})\cong \Hom_{\ModG_{\T}}(M,N)\).

\textbf{(2) Essentially surjective.}
If \(\mathcal{F}\in\QCoh(X)\), quasi-coherence means it is locally presented as a cokernel of morphisms
between free sheaves on a basic-open cover. Using the basis \(\{D_{\Gm}(a)\}\) stable under finite
intersection, local presentations descend to a module \(M\) of global sections such that
\(\mathcal{F}\cong \widetilde{M}\).
The key technical point is that gluing on overlaps uses the fraction equality relation
forced by Definition~\ref{def:triadic-eq}, so \(\Gm\)-compatibility is not lost on intersections.
\end{proof}

\begin{remark}[Quasilocal basis]
The basic opens \(D_{\Gm}(a)\) form a basis of quasi-compact opens closed under finite intersection,
so the site is ``quasilocal'' in the sense that descent and quasi-coherence can be checked on this basis.
This is the affine mechanism behind the derived equivalence of \S\ref{sec:derived-affine}.
\end{remark}

% =========================================================
\section{The ternary \Gm-tensor product and internal Hom}\label{sec:tensor}
% =========================================================

\begin{definition}[Ternary \Gm-tensor product]\label{def:tensor}
Let \(M,N\in \ModG_{\T}\).
The \emph{ternary \(\Gm\)-tensor product} \(M\otimes_{\Gm} N\) is the commutative additive monoid generated by
symbols \(m\otimes n\) (\(m\in M, n\in N\)) subject to:
\begin{enumerate}[label=\textup{(\arabic*)}, leftmargin=2.8em]
\item (Distributivity) \((m_1+m_2)\otimes n=m_1\otimes n+m_2\otimes n\) and
\(m\otimes(n_1+n_2)=m\otimes n_1+m\otimes n_2\).
\item (Triadic balancing) \(\{m,t,u\}_{\gamma}\otimes n=m\otimes \{n,t,u\}_{\gamma}\)
for all \(t,u\in\T\), \(\gamma\in\Gm\).
\item (Zero absorption) \(0\otimes n=m\otimes 0=0\).
\end{enumerate}
\end{definition}

\begin{proposition}[Universal property]\label{prop:tensorUP}
For any \(\Gm\)-module \(P\), composition induces a natural bijection
\[
\Hom_{\ModG_{\T}}(M\otimes_{\Gm}N,\ P)
\ \cong\
\{\text{\(\T\)-\(\Gm\)-balanced biadditive maps }M\times N\to P\}.
\]
\end{proposition}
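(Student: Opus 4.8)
The plan is to run the standard ``generators and relations'' argument for a universal bilinear-type construction, adapted to the ternary $\Gm$-setting. First I would make explicit what a \emph{$\T$-$\Gm$-balanced biadditive map} $\beta\colon M\times N\to P$ is: a map that is additive in each variable separately, satisfies the zero-absorption conditions $\beta(0,n)=\beta(m,0)=0$, is $\Gm$-linear in the sense that $\beta(\{a,b,m\}_{\gamma},n)=\{a,b,\beta(m,n)\}_{\gamma}=\beta(m,\{a,b,n\}_{\gamma})$ for $a,b\in\T$, $\gamma\in\Gm$, and is \emph{triadically balanced}, i.e.\ $\beta(\{m,t,u\}_{\gamma},n)=\beta(m,\{n,t,u\}_{\gamma})$ for all $t,u\in\T$. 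These are precisely the conditions dual to the defining relations (1)--(3) of Definition~\ref{def:tensor} together with the $\Gm$-action induced on $M\otimes_{\Gm}N$, which on generators is $\{a,b,m\otimes n\}_{\gamma}:=\{a,b,m\}_{\gamma}\otimes n$ and, by relation~(2), also equals $m\otimes\{a,b,n\}_{\gamma}$, hence is well defined.

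For the forward direction I would observe that the canonical map $\iota\colon M\times N\to M\otimes_{\Gm}N$, $(m,n)\mapsto m\otimes n$, is itself $\T$-$\Gm$-balanced biadditive: relation~(1) gives biadditivity, relation~(3) gives zero absorption, and relation~(2) with the induced action gives both $\Gm$-linearity and triadic balancing. Hence, for a $\Gm$-module morphism $\phi\colon M\otimes_{\Gm}N\to P$, the composite $\phi\circ\iota$ is again balanced biadditive, so $\phi\mapsto\phi\circ\iota$ is a well-defined map from the left-hand side of the claimed bijection to the right-hand side; naturality in $P$ is immediate since postcomposition with a morphism $P\to P'$ commutes with $\iota$.

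For the backward direction I would realize $M\otimes_{\Gm}N$ concretely as the quotient of the free commutative additive monoid $F$ on the symbols $\{[m,n]:m\in M,\,n\in N\}$ by the smallest monoid congruence $\approx$ containing the instances of relations (1)--(3). Given a balanced biadditive $\beta$, the universal property of the free commutative monoid yields a unique additive $\bar\beta\colon F\to P$ with $\bar\beta([m,n])=\beta(m,n)$; the biadditivity, zero-absorption and triadic-balancing hypotheses on $\beta$ say exactly that the generating pairs of $\approx$ lie in the congruence $\{(x,y)\in F\times F:\bar\beta(x)=\bar\beta(y)\}$, so $\bar\beta$ factors through a unique additive $\phi_{\beta}\colon M\otimes_{\Gm}N\to P$ with $\phi_{\beta}(m\otimes n)=\beta(m,n)$. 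Evaluating $\Gm$-linearity of $\beta$ on generators shows $\phi_{\beta}$ is a $\Gm$-module morphism. Since any additive map out of $M\otimes_{\Gm}N$ is determined by its values on the generators $m\otimes n$, the assignments $\phi\mapsto\phi\circ\iota$ and $\beta\mapsto\phi_{\beta}$ are mutually inverse, which gives the bijection; naturality is checked on generators.

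The step I expect to need the most care is working through the congruence $\approx$ rather than through a ``submodule of relations'': since we are in commutative \emph{monoids} with no subtraction, the relations cannot be packaged as the kernel of a homomorphism, so one must argue directly that the pairs $\bigl(\{a,b,m\}_{\gamma}\otimes n,\ m\otimes\{a,b,n\}_{\gamma}\bigr)$ and their companions generate a genuine monoid congruence and that $\bar\beta$ respects it, and correspondingly that zero absorption is an independent hypothesis on $\beta$, not a consequence of additivity. A secondary point is the well-definedness of the induced $\Gm$-action: one must check that the formula via the $M$-slot and the formula via the $N$-slot agree, which is again relation~(2) and which is also what makes $\iota$ genuinely $\Gm$-linear.
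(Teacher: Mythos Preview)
Your proposal is correct and follows exactly the approach the paper indicates: the paper's proof is the two-line remark that the relations in Definition~\ref{def:tensor} are precisely the balancing relations and that ``the usual free/quotient argument yields the universal property,'' and you have carefully unpacked that argument. Your attention to the monoid-congruence formulation (no subtraction available) and to the well-definedness of the induced $\Gm$-action goes beyond what the paper spells out but is entirely in the spirit of the sketch.
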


\begin{proof}
The relations in Definition~\ref{def:tensor} are exactly the balancing relations for a
biadditive map compatible with the triadic action. The usual free/quotient argument yields the universal property.
\end{proof}

\begin{remark}[Internal Hom and adjunction]
When \(\ModG_{\T}\) is enriched over commutative monoids (or abelian groups),
one defines an internal Hom object \(\underline{\Hom}_{\Gm}(M,N)\) representing
\(\Gm\)-linear maps, and then obtains a tensor--Hom adjunction.
This is the starting point for defining derived \(\RHom\) in \S\ref{sec:derived}.
\end{remark}

% =========================================================
\section{Abelian envelope and exactness hypotheses}\label{sec:abelian}
% =========================================================

Derived constructions require an additive/abelian context.

\begin{definition}[Group completion]\label{def:gp}
For a commutative additive monoid \(M\), let \(M^{\mathrm{gp}}\) denote its Grothendieck group completion.
For a \(\Gm\)-module \(M\), define \(M^{\mathrm{gp}}\) by group-completing the additive monoid.
\end{definition}

\begin{lemma}[Extension of the \Gm-action]\label{lem:action-gp}
The distributive triadic action on a \(\Gm\)-module \(M\) extends uniquely to a triadic \(\Gm\)-action on \(M^{\mathrm{gp}}\).
Thus \(M\mapsto M^{\mathrm{gp}}\) defines a functor \((-)^{\mathrm{gp}}:\ModG_{\T}\to \ModG_{\T}^{\mathrm{gp}}\),
where \(\ModG_{\T}^{\mathrm{gp}}\) denotes the full subcategory of \(\Gm\)-modules whose additive monoids are abelian groups.
\end{lemma}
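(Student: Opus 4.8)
The plan is to prove Lemma~\ref{lem:action-gp} in two stages: first establish the uniqueness and existence of the extended action on a single $M^{\mathrm{gp}}$ via the universal property of group completion, and then verify functoriality.

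\textbf{Step 1: Extend the action.} Fix $M\in\ModG_{\T}$ and write $\iota:M\to M^{\mathrm{gp}}$ for the canonical monoid map, with the standard model $M^{\mathrm{gp}}=(M\times M)/{\approx}$ where $(m_1,m_2)\approx(m_1',m_2')$ iff $m_1+m_2'+k=m_1'+m_2+k$ for some $k\in M$; we write $[m_1-m_2]$ for the class. For each $a,b\in\T$ and $\gamma\in\Gm$ the map $M\to M^{\mathrm{gp}}$, $m\mapsto \iota(\{a,b,m\}_\gamma)$ is a monoid homomorphism by distributivity in $m$ (Definition~\ref{def:GammaModule}(i)), so by the universal property of $(-)^{\mathrm{gp}}$ it factors uniquely through an additive endomorphism of $M^{\mathrm{gp}}$; concretely this endomorphism sends $[m_1-m_2]$ to $[\{a,b,m_1\}_\gamma-\{a,b,m_2\}_\gamma]$. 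This simultaneously proves existence and uniqueness of the extension \emph{as a function} — any $\Gm$-action on $M^{\mathrm{gp}}$ restricting to the original one along $\iota$ must be additive in the module slot, hence agrees with this formula. It remains to check that the scalar slots also behave well: one verifies additivity in $a$ (resp.\ $b$) directly from the formula using distributivity of $\{-,-,-\}_\gamma$ in its first (resp.\ second) $\T$-argument, and zero-absorption $\{a,0,m\}_\gamma=0$ descends because $\{a,0,m_1\}_\gamma=\{a,0,m_2\}_\gamma=0$ in $M$.

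\textbf{Step 2: Axioms.} I would then check that the extended action satisfies Definition~\ref{def:GammaModule}(i)--(iii). Distributivity in $m$ holds by construction (the endomorphism is additive); distributivity in the $\T$-slots and zero-absorption were handled in Step~1. For ternary associativity compatibility (ii), both sides of
\[
\{a,b,\{c,d,[m_1-m_2]\}_{\gamma}\}_{\delta}=\{\{a,b,c\}_{\gamma},d,[m_1-m_2]\}_{\delta}
\]
unwind, via the formula of Step~1, to $[\,\xi(m_1)-\xi(m_2)\,]$ where $\xi$ is the corresponding operator on $M$, and these agree because the identity already holds in $M$. Thus $M^{\mathrm{gp}}$ is a $\Gm$-module whose underlying monoid is a group, i.e.\ $M^{\mathrm{gp}}\in\ModG_{\T}^{\mathrm{gp}}$.

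\textbf{Step 3: Functoriality.} Given a $\Gm$-module morphism $f:M\to N$, group completion gives an additive map $f^{\mathrm{gp}}:M^{\mathrm{gp}}\to N^{\mathrm{gp}}$, $[m_1-m_2]\mapsto[f(m_1)-f(m_2)]$; compatibility with the extended action is immediate from the explicit formulas since $f$ is action-compatible on $M$. Identity and composition are preserved because they already are at the level of group completion of monoids, so $(-)^{\mathrm{gp}}$ is a functor. I do not expect a genuine obstacle here: the only mild subtlety is bookkeeping around the equivalence relation defining $M^{\mathrm{gp}}$ (the auxiliary $+k$), and this is absorbed entirely by invoking the universal property rather than manipulating representatives — that is the one place where care is needed, and it is what makes the uniqueness clause clean.
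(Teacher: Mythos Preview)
Your proposal is correct and follows essentially the same approach as the paper: for fixed $a,b,\gamma$ one uses additivity of $m\mapsto\{a,b,m\}_{\gamma}$ together with the universal property of Grothendieck completion to obtain the unique extension, and then checks that the remaining axioms and functoriality descend by additivity. The paper's own proof is much terser (it omits the explicit verification of the module axioms and of functoriality), but the underlying argument is identical.
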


\begin{proof}
For fixed \(a,b,\gamma\), the map \(m\mapsto \{a,b,m\}_{\gamma}\) is additive by distributivity in the \(M\)-variable.
By the universal property of Grothendieck completion, it extends uniquely to an endomorphism of \(M^{\mathrm{gp}}\).
Coherence of the action extends by additivity.
\end{proof}

\begin{assumption}[Exactness context]\label{ass:exact}
Throughout the derived sections we assume:
\begin{enumerate}[label=\textup{(\Alph*)}, leftmargin=2.8em]
\item \(\ModG_{\T}^{\mathrm{gp}}\) is an abelian category (kernels/cokernels exist and are stable under pullback/pushout).
\item \(\ModG_{\T}^{\mathrm{gp}}\) has enough projectives (or admits a projective class suitable for deriving \(\otimes_{\Gm}\)).
\item The tensor product \(-\otimes_{\Gm}-\) is right exact in each variable on \(\ModG_{\T}^{\mathrm{gp}}\).
\end{enumerate}
\end{assumption}

\begin{remark}
Assumption~\ref{ass:exact} is satisfied in the motivating case where the underlying additive monoids are abelian groups
and the \(\Gm\)-action is additive in all variables, so that \(\ModG_{\T}^{\mathrm{gp}}\) behaves like a module category.
When these hypotheses fail, one should work in an exact-category or model-category setting; see \cite{Hovey1999}.
\end{remark}

% =========================================================
\section{Derived categories and derived functors}\label{sec:derived}
% =========================================================

Let \(\Ch(\ModG_{\T}^{\mathrm{gp}})\) denote the category of unbounded chain complexes in \(\ModG_{\T}^{\mathrm{gp}}\).
A morphism is a \emph{quasi-isomorphism} if it induces an isomorphism on all homology objects.

\begin{definition}[Unbounded derived category]
The derived category is the Verdier localization
\[
\D(\T,\Gm)\ :=\ \Ch(\ModG_{\T}^{\mathrm{gp}})[W^{-1}],
\]
where \(W\) is the class of quasi-isomorphisms.
\end{definition}

\begin{proposition}[Derived tensor and \(\RHom\)]\label{prop:derived-functors}
Under Assumption~\ref{ass:exact}, the bifunctor \(-\otimes_{\Gm}-\) admits a total left derived functor
\[
-\otimes_{\Gm}^{\mathbb{L}}-:\D(\T,\Gm)\times \D(\T,\Gm)\to \D(\T,\Gm),
\]
and the Hom functor admits a right derived functor \(\RHom_{\Gm}(-,-)\).
Their homology recovers the classical derived functors \(\Tor^{\Gm}_*\) and \(\Ext^*_{\Gm}\).
\end{proposition}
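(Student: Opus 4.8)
The plan is to build $\otimes_{\Gm}^{\mathbb{L}}$ and $\RHom_{\Gm}$ by resolving unbounded complexes, and then to check well-definedness, the triangulated structure in each variable, and the identification of the homology with $\Tor^{\Gm}_*$ and $\Ext^*_{\Gm}$. First I would produce resolutions. Under Assumption~\ref{ass:exact}(A)--(B) the category $\ModG_{\T}^{\mathrm{gp}}$ is abelian with enough projectives, and in the motivating module-like case it is cocomplete with exact filtered colimits; a Spaltenstein-type transfinite construction — or a functorial bar resolution against the projective class of (B) — then yields, for every $X\in\Ch(\ModG_{\T}^{\mathrm{gp}})$, a quasi-isomorphism $P_X\to X$ with $P_X$ K-projective, i.e.\ $\underline{\Hom}_{\Gm}(P_X,-)$ preserves quasi-isomorphisms; cf.\ \cite{Keller1994,Neeman2001,KashiwaraSchapira2006}. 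Using right-exactness of $\otimes_{\Gm}$ from Assumption~\ref{ass:exact}(C) one checks that K-projective complexes are K-flat, so the same $P_X$ serve for the tensor construction. These resolutions may be chosen functorially, or at least functorially up to coherent homotopy, which suffices after passing to $\D(\T,\Gm)$.

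Next I would set
\[
X\otimes_{\Gm}^{\mathbb{L}}Y:=\mathrm{Tot}^{\oplus}\!\bigl(P_X\otimes_{\Gm}P_Y\bigr),\qquad
\RHom_{\Gm}(X,Y):=\mathrm{Tot}^{\Pi}\,\underline{\Hom}_{\Gm}\!\bigl(P_X,Y\bigr),
\]
where $\mathrm{Tot}$ denotes the total complex of the displayed double complex. Independence of the choice of resolution is the standard double-complex comparison: a quasi-isomorphism between K-projective complexes is a chain homotopy equivalence, tensoring a quasi-isomorphism with a K-flat complex is again a quasi-isomorphism, and $\underline{\Hom}_{\Gm}(-,Y)$ carries homotopy equivalences to homotopy equivalences; hence both constructions descend to functors on $\D(\T,\Gm)$. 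Note that for $\RHom$ only the source is resolved, since Assumption~\ref{ass:exact} provides enough projectives but not enough injectives; this is legitimate precisely because $P_X$ is K-projective. Exactness in each variable follows because $\mathrm{Tot}$ carries a degreewise-split short exact sequence of double complexes to a mapping-cone triangle, so short exact sequences of complexes (in either slot) go to distinguished triangles, and compatibility with suspension is immediate. The associativity, symmetry and unit constraints on $\otimes_{\Gm}$ coming from Definition~\ref{def:tensor} and the $\T$-variable commutativity of $\{-,-,-\}_{\Gm}$ lift to coherent isomorphisms, and the tensor--Hom adjunction of the Remark after Proposition~\ref{prop:tensorUP} derives — using that $P_X\otimes_{\Gm}P_Y$ is K-projective when $P_X,P_Y$ are — to a natural isomorphism $\RHom_{\Gm}(X\otimes_{\Gm}^{\mathbb{L}}Y,Z)\cong\RHom_{\Gm}(X,\RHom_{\Gm}(Y,Z))$.

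Finally, for $M,N\in\ModG_{\T}^{\mathrm{gp}}$ viewed in degree $0$, choosing an ordinary projective resolution $P_\bullet\to M$ (which is K-flat) gives $H_n(M\otimes_{\Gm}^{\mathbb{L}}N)=H_n(P_\bullet\otimes_{\Gm}N)=:\Tor^{\Gm}_n(M,N)$, and symmetry in $M$ and $N$ is exactly where the triadic balancing relation of Definition~\ref{def:tensor}(2) enters: it identifies $P_\bullet\otimes_{\Gm}N$ with the result of resolving $N$ instead, via the total complex $P_\bullet\otimes_{\Gm}Q_\bullet$. Likewise $H^n\RHom_{\Gm}(M,N)=H^n\underline{\Hom}_{\Gm}(P_\bullet,N)=:\Ext^n_{\Gm}(M,N)$, recovering the classical graded functors.

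I expect the main obstacle to be the first step in full generality: ``enough projectives'' alone does not force K-projective (hence K-flat) resolutions of \emph{unbounded} complexes in an arbitrary abelian category, so one must pin down the ambient hypotheses on $\ModG_{\T}^{\mathrm{gp}}$ — cocompleteness with exact filtered colimits, or a functorial projective-class resolution — under which the Spaltenstein construction applies. When these fail one falls back to the exact- or model-categorical framework flagged after Assumption~\ref{ass:exact}, working with \cite{Hovey1999}; the identification with $\Tor^{\Gm}_*$ and $\Ext^*_{\Gm}$ in low degrees is unaffected, as it only uses bounded resolutions.
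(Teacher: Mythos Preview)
Your argument is correct and follows the same resolution-and-totalize strategy the paper invokes by citation; the paper's own proof is the one-line ``choose projective resolutions in one variable for $\otimes_{\Gm}^{\mathbb{L}}$ and injective (or K-injective) resolutions for $\RHom$; see \cite{Weibel1994,GelfandManin2003,Spaltenstein1988}''. Your write-up is simply the unpacked version of that sentence, with the Spaltenstein existence step, the K-flatness check, and the double-complex comparison made explicit.

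There is one genuine difference worth noting. The paper computes $\RHom_{\Gm}$ by resolving the \emph{target} injectively, whereas you resolve the \emph{source} K-projectively. Your choice is the one actually licensed by Assumption~\ref{ass:exact}, which posits enough projectives but says nothing about injectives; the paper's appeal to (K-)injective resolutions tacitly assumes a Grothendieck-type hypothesis it has not stated. So your route is not merely more detailed but also better aligned with the standing hypotheses. Conversely, the paper's route, when injectives are available, has the advantage that $\RHom_{\Gm}(-,I)$ with $I$ K-injective is manifestly exact in the first variable without needing the K-projective $\Rightarrow$ K-flat step. Your closing caveat about unbounded K-projective resolutions requiring more than bare ``enough projectives'' is well taken and applies equally to the paper's citation of Spaltenstein.
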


\begin{proof}
Choose projective resolutions in one variable to compute \(\otimes_{\Gm}^{\mathbb{L}}\),
and injective (or K-injective) resolutions to compute \(\RHom\); see \cite{Weibel1994,GelfandManin2003,Spaltenstein1988}.
\end{proof}

% =========================================================

% =========================================================
\section{t-structures and the abelian heart}\label{sec:tstructure}
% =========================================================

\subsection{The standard t-structure}
On the derived category \(\D(\T,\Gm)\) (and hence on \(\Dinfty(\T,\Gm)\)) there is the standard t-structure:
\[
\D^{\le 0}=\{K: H^i(K)=0 \text{ for } i>0\},\qquad
\D^{\ge 0}=\{K: H^i(K)=0 \text{ for } i<0\}.
\]
Its heart is equivalent to the abelian category \(\ModG_{\T}^{\mathrm{gp}}\) (Assumption~\ref{ass:exact}(A1)).

\begin{proposition}[Heart identification]\label{prop:heart}
Assume \(\ModG_{\T}^{\mathrm{gp}}\) is abelian.
Then the heart of the standard t-structure on \(\Dinfty(\T,\Gm)\) is equivalent to \(\ModG_{\T}^{\mathrm{gp}}\).
\end{proposition}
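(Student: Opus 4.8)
The plan is to identify the heart $\mathcal{H}=\Dinfty^{\le 0}\cap\Dinfty^{\ge 0}$ with complexes concentrated in degree $0$, and then show that the inclusion $\ModG_{\T}^{\mathrm{gp}}\hookrightarrow\Dinfty(\T,\Gm)$ sending $M$ to the complex $M[0]$ lands in $\mathcal{H}$ and is an equivalence onto it. First I would recall that $\Dinfty(\T,\Gm)$ is the stable $\infty$-category obtained by dg-nerve and $\infty$-localization from $\Ch(\ModG_{\T}^{\mathrm{gp}})$ at the quasi-isomorphisms (\S\ref{sec:dinfty}), so that its homotopy category is the triangulated category $\D(\T,\Gm)$, and the homology functors $H^i$ and the truncation functors $\tau^{\le n},\tau^{\ge n}$ descend from the classical ones on chain complexes. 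The subcategories $\Dinfty^{\le 0}$ and $\Dinfty^{\ge 0}$ defined via vanishing of $H^i$ genuinely form a t-structure: this is the standard argument that cohomological truncation of chain complexes produces the two required adjoints and the defining exact triangle $\tau^{\le 0}K\to K\to\tau^{\ge 1}K$, and it transports to the $\infty$-categorical setting because quasi-isomorphisms are inverted (cf.\ \cite{LurieHA}, \cite{Weibel1994}).

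The core of the argument is the functor comparison. Define $\iota:\ModG_{\T}^{\mathrm{gp}}\to\mathcal{H}$ by $M\mapsto M[0]$; this is well defined since $H^i(M[0])=0$ for $i\ne 0$. I would first check $\iota$ is fully faithful at the level of the homotopy category: $\Hom_{\D(\T,\Gm)}(M[0],N[0])$ is computed by a K-projective (or K-injective) resolution and collapses to $\Hom_{\ModG_{\T}^{\mathrm{gp}}}(M,N)$ because the higher $\Ext$-groups live in nonzero degrees and do not contribute to morphisms between objects of the heart. On the $\infty$-categorical side, the mapping spaces between heart objects are discrete — their higher homotopy groups are $\Ext^{-i}$, which vanish for $i<0$ by the t-structure axioms and for $i=0$ give $\pi_0=\Hom$ — so full faithfulness of $\iota$ as an $\infty$-functor follows. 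For essential surjectivity, take $K\in\mathcal{H}$: since $H^i(K)=0$ for all $i\ne 0$, the canonical maps $K\leftarrow\tau^{\le 0}K\to\tau^{\le 0}\tau^{\ge 0}K\cong(H^0 K)[0]$ (or the evident zig-zag of truncations) are equivalences, so $K\simeq\iota(H^0 K)$. Thus $\iota$ is an equivalence of $\infty$-categories $\ModG_{\T}^{\mathrm{gp}}\xrightarrow{\sim}\mathcal{H}$, and since the heart of a t-structure is automatically an abelian category, this also re-proves abelianness consistently with Assumption~\ref{ass:exact}(A1). Finally I would note that $\iota$ is compatible with $H^0$ — i.e.\ $H^0\circ\iota\simeq\mathrm{id}$ and $\iota\circ H^0\simeq\mathrm{id}$ on $\mathcal{H}$ — so the equivalence is the expected one, exact for the abelian structures on both sides.

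The main obstacle I anticipate is not conceptual but foundational hygiene: one must be careful that the $\infty$-categorical mapping spaces in $\Dinfty(\T,\Gm)$, as produced by the dg-nerve/localization construction of \S\ref{sec:dinfty}, really do compute $\RHom$ with the expected homotopy groups $\pi_n\,\mathrm{Map}(M[0],N[0])=\Ext^{-n}_{\Gm}(M,N)$. This requires knowing that the dg enhancement of $\Ch(\ModG_{\T}^{\mathrm{gp}})$ computes derived Hom correctly — which in turn needs the existence of K-injective (or K-projective) resolutions in the unbounded setting, available here by Spaltenstein's argument \cite{Spaltenstein1988} under Assumption~\ref{ass:exact} — and that $\infty$-localization does not alter these mapping spaces, which is the content of the comparison between dg-localization and Dwyer–Kan localization in \cite{LurieHA}. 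Once that identification is in place, the vanishing of negative $\Ext$ between heart objects is automatic from the t-structure, and the remaining steps (full faithfulness, essential surjectivity via truncation) are the routine ``heart of a t-structure'' computations, so I would treat them briefly and concentrate the written proof on pinning down the mapping-space identification and the truncation zig-zag.
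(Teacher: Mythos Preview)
Your proposal is correct and follows essentially the same line as the paper's proof: identify the heart with complexes concentrated in degree~$0$ via $M\mapsto M[0]$, and identify morphisms in the heart with morphisms in $\ModG_{\T}^{\mathrm{gp}}$. The paper's own proof is a two-sentence sketch to this effect, whereas you expand the full-faithfulness step (via $\Ext^{-n}$-vanishing on mapping spaces) and the essential-surjectivity step (via the truncation zig-zag) and flag the foundational point about mapping spaces in the dg-nerve localization; none of this departs from the paper's route, it simply supplies the details the paper elides.
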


\begin{proof}
Objects in the heart are precisely complexes concentrated in degree \(0\).
Morphisms in the heart are homotopy classes of degree \(0\) chain maps, i.e.\ morphisms in \(\ModG_{\T}^{\mathrm{gp}}\).
\end{proof}

\subsection{Compact generation (affine case)}
In many affine settings, \(\Dinfty(\T,\Gm)\) is compactly generated by the free module \(\T\) (and its shifts).
When this holds, one can recover \(\Dinfty(\T,\Gm)\) from the endomorphism algebra of \(\T\)
via Morita-type arguments (compare \cite{LurieHA,Keller1994}).
We record this as a guiding principle, not as an unconditional theorem in the ternary setting.

\section{The stable \texorpdfstring{$\infty$}{∞}-category \texorpdfstring{$\Dinfty(\T,\Gm)$}{D∞(T,Gamma)}}\label{sec:dinfty}
% =========================================================

\subsection{dg enhancement and \texorpdfstring{$\infty$}{∞}-localization}

\subsection{A model-categorical presentation (optional but explicit)}\label{subsec:model}
Under Assumption~\ref{ass:exact}(A1)--(A3), the category \(\Ch(\ModG_{\T}^{\mathrm{gp}})\)
admits the projective model structure in which weak equivalences are quasi-isomorphisms
and fibrations are degreewise epimorphisms.
Let \(\Ch(\ModG_{\T}^{\mathrm{gp}})^{\circ}\) be the full subcategory of cofibrant--fibrant objects.
Then one may alternatively define the derived \(\infty\)-category as the simplicial localization
(Dwyer--Kan hammock localization) of \(\Ch(\ModG_{\T}^{\mathrm{gp}})\) at quasi-isomorphisms
and identify it with the homotopy coherent nerve of \(\Ch(\ModG_{\T}^{\mathrm{gp}})^{\circ}\).
This provides a concrete presentation of \(\Dinfty(\T,\Gm)\) compatible with the dg-nerve definition
and makes homotopy limits/colimits transparent; compare \cite{DwyerKan1980a,DwyerKan1980b,Hovey1999,Quillen1967,GabrielZisman1967}.
Let \(\Ch_{\mathrm{dg}}(\ModG_{\T}^{\mathrm{gp}})\) be the dg-category of chain complexes,
with mapping complexes given by the usual Hom-complex of chain maps.
Let \(\Nerve_{\mathrm{dg}}\) denote the dg-nerve functor \cite{Keller1994}.

\begin{definition}[Stable derived \(\infty\)-category]\label{def:Dinfty}
Define
\[
\Dinfty(\T,\Gm)\ :=\ \Nerve_{\mathrm{dg}}\big(\Ch_{\mathrm{dg}}(\ModG_{\T}^{\mathrm{gp}})\big)[W^{-1}],
\]
the \(\infty\)-categorical localization at quasi-isomorphisms.
\end{definition}

\begin{theorem}[Stability and triangulated shadow]\label{thm:stable}
\(\Dinfty(\T,\Gm)\) is a stable \(\infty\)-category, and its homotopy category
\(h\Dinfty(\T,\Gm)\) identifies with \(\D(\T,\Gm)\) as a triangulated category.
\end{theorem}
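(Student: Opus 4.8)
The plan is to reduce Theorem~\ref{thm:stable} to two standard facts: (i) the dg-nerve of a pretriangulated dg-category is a stable $\infty$-category whose homotopy category is its $0$-th cohomology category, with cofiber sequences represented by mapping-cone triangles; and (ii) an exact (Verdier) localization of a stable $\infty$-category by a thick stable subcategory is again stable, and passing to homotopy categories turns it into the triangulated Verdier quotient. First I would observe that the dg-category $\Ch_{\mathrm{dg}}(\ModG_{\T}^{\mathrm{gp}})$ is pretriangulated: it admits strict shifts $K\mapsto K[1]$ and strict mapping cones $\mathrm{Cone}(f)$ for every chain map $f$, and these induce on $H^{0}(\Ch_{\mathrm{dg}}(\ModG_{\T}^{\mathrm{gp}}))=K(\ModG_{\T}^{\mathrm{gp}})$, the ordinary homotopy category of complexes, its usual triangulated structure. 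Hence, by the dg-nerve/pretriangulated comparison \cite{Keller1994,LurieHA}, $\Nerve_{\mathrm{dg}}(\Ch_{\mathrm{dg}}(\ModG_{\T}^{\mathrm{gp}}))$ is \emph{already} a stable $\infty$-category, with homotopy category $K(\ModG_{\T}^{\mathrm{gp}})$.

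Next I would identify the localization of Definition~\ref{def:Dinfty} as a Verdier quotient. Inside $\Nerve_{\mathrm{dg}}(\Ch_{\mathrm{dg}}(\ModG_{\T}^{\mathrm{gp}}))$ consider the full subcategory spanned by the acyclic complexes; it is the dg-nerve of the dg-subcategory of acyclics, which is closed under shifts and cones, hence a stable subcategory, and it is thick because a retract of an acyclic complex has vanishing homology. A chain map is a quasi-isomorphism exactly when its cofiber (its mapping cone) is acyclic, so $W$ is precisely the class of morphisms with cofiber in that subcategory. Consequently the $\infty$-categorical localization in Definition~\ref{def:Dinfty} is the Verdier quotient of $\Nerve_{\mathrm{dg}}(\Ch_{\mathrm{dg}}(\ModG_{\T}^{\mathrm{gp}}))$ by the thick stable subcategory of acyclics \cite{LurieHA}; by the general theory this quotient is stable, which proves the first assertion.

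For the homotopy category, $h(-)$ carries the Verdier quotient of stable $\infty$-categories to the triangulated Verdier quotient \cite{LurieHA,Neeman2001}, so $h\Dinfty(\T,\Gm)$ is $K(\ModG_{\T}^{\mathrm{gp}})$ modulo the thick subcategory of acyclic complexes. On the other hand, since every chain homotopy equivalence is a quasi-isomorphism, the defining localization of \S\ref{sec:derived} satisfies $\D(\T,\Gm)=\Ch(\ModG_{\T}^{\mathrm{gp}})[W^{-1}]=K(\ModG_{\T}^{\mathrm{gp}})[W^{-1}]$, and by the classical calculus of fractions on $K(\ModG_{\T}^{\mathrm{gp}})$ \cite{Weibel1994,Neeman2001} this is exactly the same triangulated Verdier quotient. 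The induced equivalence $h\Dinfty(\T,\Gm)\simeq\D(\T,\Gm)$ is exact since every comparison functor involved---the identification $h\Nerve_{\mathrm{dg}}(\Ch_{\mathrm{dg}}(\ModG_{\T}^{\mathrm{gp}}))\simeq K(\ModG_{\T}^{\mathrm{gp}})$ and the two quotient functors---commutes with shifts and sends mapping-cone triangles to distinguished triangles. Equivalently, I could route the argument through the projective model presentation of \S\ref{subsec:model}: replace $\Dinfty(\T,\Gm)$ by $\Nerve_{\mathrm{dg}}$ of the dg-category of cofibrant (K-projective) complexes, on which quasi-isomorphisms are chain homotopy equivalences so that no further localization is needed, and invoke that K-projective resolutions compute $\D(\T,\Gm)$ \cite{Spaltenstein1988}.

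The main obstacle is not stability---which is formal once the pretriangulated structure is in place---but controlling the localization in the \emph{unbounded} regime and keeping it set-theoretically legitimate. For a general abelian category, $\Ch(\ModG_{\T}^{\mathrm{gp}})[W^{-1}]$ need not have small Hom-sets, and both the Verdier-quotient argument and the model-categorical route implicitly require $\ModG_{\T}^{\mathrm{gp}}$ to be well-behaved---e.g.\ a Grothendieck category, or a genuine module category with enough compactly generating projectives so that unbounded K-projective resolutions exist \cite{Spaltenstein1988,Hovey1999}. This holds in the motivating situation of the Remark following Assumption~\ref{ass:exact}, and I would carry out the identification under that hypothesis, stressing that Definition~\ref{def:Dinfty} is itself unconditional while the comparison with the triangulated $\D(\T,\Gm)$ is precisely where such a hypothesis is genuinely used. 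A secondary point to treat with care is the comparison between the dg-nerve localization and the simplicial/hammock localization mentioned in \S\ref{subsec:model}, which is standard but should be cited explicitly \cite{Keller1994,LurieHA,DwyerKan1980a,DwyerKan1980b}.
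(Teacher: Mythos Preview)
Your argument is correct and follows the same route as the paper's (brief) sketch: observe that $\Ch_{\mathrm{dg}}(\ModG_{\T}^{\mathrm{gp}})$ is pretriangulated so its dg-nerve is stable with homotopy category $K(\ModG_{\T}^{\mathrm{gp}})$, then identify the $\infty$-localization at quasi-isomorphisms with the Verdier quotient by acyclics and pass to homotopy categories to recover $\D(\T,\Gm)$. Your version is considerably more detailed than the paper's---in particular the explicit identification of $W$ as the cone-in-acyclics class, the set-theoretic caveat in the unbounded regime, and the alternative K-projective route via \S\ref{subsec:model}---but the strategy is the same.
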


\begin{proof}[Expanded sketch]
By \cite{LurieHA}, the dg-nerve of a pretriangulated dg-category yields a stable \(\infty\)-category.
Localizing at quasi-isomorphisms identifies morphisms inducing homology equivalences, hence the homotopy category
recovers the Verdier localization.
Triangulated structure on \(h\Dinfty(\T,\Gm)\) arises from cofiber/fiber sequences in the stable \(\infty\)-category.
\end{proof}

\subsection{Derived monoidal structure}
Under Assumption~\ref{ass:exact}, the derived tensor \(\otimes_{\Gm}^{\mathbb{L}}\) exists on \(\D(\T,\Gm)\).
Using operadic methods, one promotes this to a (symmetric) monoidal structure on \(\Dinfty(\T,\Gm)\)
whose homotopy category monoidal product recovers \(\otimes_{\Gm}^{\mathbb{L}}\); see \cite{LurieHA}.

% =========================================================
\section{Derived affine equivalence}\label{sec:derived-affine}
% =========================================================

Let \(X=\SpecG(\T)\). Form the dg-category of complexes of quasi-coherent \(\Gm\)-sheaves,
\(\Ch_{\mathrm{dg}}(\QCoh(X)^{\mathrm{gp}})\), and define \(\Dinfty(\QCoh(X),\Gm)\) by \(\infty\)-localization at quasi-isomorphisms.

\begin{theorem}[Derived affine equivalence]\label{thm:derived_affine_equiv}
Assume Theorem~\ref{thm:affine_equiv} and Assumption~\ref{ass:exact} hold in the group-completed context.
Then the affine equivalence \(\Phi:\ModG_{\T}\to \QCoh(X)\) upgrades to an equivalence of stable \(\infty\)-categories
\[
\Dinfty(\T,\Gm)\ \simeq\ \Dinfty(\QCoh(X),\Gm).
\]
\end{theorem}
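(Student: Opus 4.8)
The plan is to deduce the derived statement formally from the underived equivalence $\Phi$ of Theorem~\ref{thm:affine_equiv} by transporting it through the three functorial constructions that assemble $\Dinfty$: group completion, $\Ch_{\mathrm{dg}}(-)$, and $\Nerve_{\mathrm{dg}}(-)[W^{-1}]$. First I would upgrade $\Phi$ to an equivalence of \emph{abelian} categories $\Phi^{\mathrm{gp}}:\ModG_{\T}^{\mathrm{gp}}\xrightarrow{\ \sim\ }\QCoh(X)^{\mathrm{gp}}$. The point is that $\Phi$ is additive and preserves the additive-monoid structure on Hom-sets, hence sends abelian-group objects to abelian-group objects and intertwines the two notions of group completion: on the module side $(-)^{\mathrm{gp}}$ is Grothendieck completion (Definition~\ref{def:gp}, Lemma~\ref{lem:action-gp}), while on the sheaf side it is computed sectionwise over the basic opens $D_{\Gm}(a)$ and then sheafified. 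One checks $\widetilde{M^{\mathrm{gp}}}\cong(\widetilde{M})^{\mathrm{gp}}$ because localization at a multiplicative system is a left adjoint (Proposition~\ref{prop:locM_UP}) and sheafification is a left adjoint, so both commute with group completion, and because the gluing of the completed local pieces on overlaps still proceeds through the cubic fraction equality of Definition~\ref{def:triadic-eq}, exactly as in the proof of Theorem~\ref{thm:affine_equiv}. Granting Assumption~\ref{ass:exact}(A1) on both sides, $\Phi^{\mathrm{gp}}$ is then an exact equivalence of abelian categories.

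Second, an exact equivalence of abelian categories induces a dg-equivalence $\Ch_{\mathrm{dg}}(\ModG_{\T}^{\mathrm{gp}})\xrightarrow{\ \sim\ }\Ch_{\mathrm{dg}}(\QCoh(X)^{\mathrm{gp}})$: it acts degreewise on complexes, commutes with differentials, and is bijective on mapping complexes since these are built functorially from the underlying Hom-groups, on which $\Phi^{\mathrm{gp}}$ is fully faithful. Because $\Phi^{\mathrm{gp}}$ and its inverse are exact, this dg-functor both preserves and reflects quasi-isomorphisms, i.e.\ it matches the two classes $W$. Applying the dg-nerve $\Nerve_{\mathrm{dg}}$ (functorial) yields an equivalence of $\infty$-categories before localization carrying $W$ to $W$, and by the universal property of $\infty$-categorical localization \cite{LurieHTT,LurieHA} such an equivalence descends to an equivalence of the localizations. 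Hence
\[
\Dinfty(\T,\Gm)=\Nerve_{\mathrm{dg}}\big(\Ch_{\mathrm{dg}}(\ModG_{\T}^{\mathrm{gp}})\big)[W^{-1}]\ \simeq\ \Nerve_{\mathrm{dg}}\big(\Ch_{\mathrm{dg}}(\QCoh(X)^{\mathrm{gp}})\big)[W^{-1}]=\Dinfty(\QCoh(X),\Gm),
\]
and by Theorem~\ref{thm:stable} both sides are stable, so this is an equivalence of stable $\infty$-categories. (An alternative route transports the projective model structure of \S\ref{subsec:model} along $\Phi^{\mathrm{gp}}$ and invokes that a Quillen equivalence induces an equivalence of underlying $\infty$-categories; this gives the same conclusion and additionally makes homotopy (co)limits transparent.) The comparison functor is moreover compatible with the standard $t$-structures of \S\ref{sec:tstructure}, recovering $\Phi^{\mathrm{gp}}$ on hearts via Proposition~\ref{prop:heart}, and, under Assumption~\ref{ass:exact}(A2)--(A3), with the derived monoidal structures, since $\Phi^{\mathrm{gp}}$ transports projective (or $\otimes_{\Gm}$-acyclic) resolutions and hence $\otimes_{\Gm}^{\mathbb{L}}$.

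The main obstacle I anticipate lies entirely in the first step: verifying that $\Phi$ genuinely lifts to the group-completed \emph{abelian} categories in a compatible way, i.e.\ that $\QCoh(X)^{\mathrm{gp}}$ is the intended abelian category and that $(-)^{\mathrm{gp}}$ commutes with $M\mapsto\widetilde{M}$. This needs care because group completion of a general additive monoid is not exact, so one must either restrict to the motivating case where the underlying monoids are already abelian groups (as in the Remark following Assumption~\ref{ass:exact}), or check directly that sheafifying the sectionwise completions yields a quasi-coherent sheaf of groups with global sections $M^{\mathrm{gp}}$ — and that verification is precisely where the cubic localization calculus must be re-examined on triple overlaps, using the quasilocal basis of the Remark after Theorem~\ref{thm:affine_equiv}. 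Once $\Phi^{\mathrm{gp}}$ is established as an exact equivalence, everything downstream is formal: functoriality of $\Ch_{\mathrm{dg}}(-)$, of $\Nerve_{\mathrm{dg}}$, and of $\infty$-localization with respect to equivalences matching the weak equivalences.
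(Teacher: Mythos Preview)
Your proposal is correct and follows essentially the same route as the paper's proof: group-complete $\Phi$ to an exact equivalence $\Phi^{\mathrm{gp}}$, pass to dg-categories of complexes to get a dg-equivalence preserving and reflecting quasi-isomorphisms, then apply $\Nerve_{\mathrm{dg}}$ and $\infty$-localize. Your write-up is in fact more detailed than the paper's expanded sketch (which presents the same steps and a commutative square), and your identification of the only nonformal point---compatibility of $(-)^{\mathrm{gp}}$ with $M\mapsto\widetilde{M}$ over the quasilocal basis---is exactly the content the paper absorbs into the phrase ``computed locally on the quasilocal basis.''
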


\begin{proof}[Expanded proof sketch with diagram]
Apply group completion objectwise to obtain \(\Phi^{\mathrm{gp}}:\ModG_{\T}^{\mathrm{gp}}\to \QCoh(X)^{\mathrm{gp}}\).
Passing to dg categories of complexes yields a dg equivalence
\(\Ch_{\mathrm{dg}}(\Phi^{\mathrm{gp}}):\Ch_{\mathrm{dg}}(\ModG_{\T}^{\mathrm{gp}})\to \Ch_{\mathrm{dg}}(\QCoh(X)^{\mathrm{gp}})\),
which preserves and reflects quasi-isomorphisms because it is exact and computed locally on the quasilocal basis (compare the affine case in \cite{Hartshorne1977,StacksProject}).
Taking dg-nerve and \(\infty\)-localization at quasi-isomorphisms yields the desired equivalence:
\[
\begin{tikzcd}[column sep=huge, row sep=large]
\Nerve_{\mathrm{dg}}\big(\Ch_{\mathrm{dg}}(\ModG_{\T}^{\mathrm{gp}})\big) \arrow[r,"\Nerve_{\mathrm{dg}}(\Ch(\Phi^{\mathrm{gp}}))"] \arrow[d] &
\Nerve_{\mathrm{dg}}\big(\Ch_{\mathrm{dg}}(\QCoh(X)^{\mathrm{gp}})\big) \arrow[d] \\
\Dinfty(\T,\Gm) \arrow[r,"\sim"] & \Dinfty(\QCoh(X),\Gm).
\end{tikzcd}
\]
\end{proof}

% =========================================================
\section{Obstructions to binary reduction}\label{sec:obstruction}
% =========================================================

\subsection{Localization-compatible binary shadows}
The obstruction we prove is stronger than ``\(\Gm\)-nontriviality'':
it uses the fact that fraction equality in \(S^{-1}\T\) is governed by the cubic scaling law.

\begin{definition}[Binary shadow compatible with \Gm-localization]\label{def:binary-shadow}
A \emph{localization-compatible binary shadow} of \(\ModG_{\T}^{\mathrm{gp}}\) consists of:
\begin{enumerate}[label=\textup{(\arabic*)}, leftmargin=2.8em]
\item a commutative ring (or semiring) \(R\) and an additive monoid morphism \(\iota:\T\to R\);
\item an additive functor \(F:\ModG_{\T}^{\mathrm{gp}}\to \mathrm{Mod}_R\) which is conservative and exact;
\item for every multiplicative set \(S\subseteq \T\) and \(M\in\ModG_{\T}^{\mathrm{gp}}\),
a natural isomorphism \(F(S^{-1}M)\cong \iota(S)^{-1}F(M)\) compatible with the canonical maps;
\item \emph{reflection of cubic fraction equality} on the regular module:
for \(a,b\in\T\) and \(s,t\in S\),
\[
\frac{a}{s}=\frac{b}{t}\ \text{in }S^{-1}\T
\quad\Longleftrightarrow\quad
\frac{\iota(a)}{\iota(s)}=\frac{\iota(b)}{\iota(t)}\ \text{in }\iota(S)^{-1}R,
\]
where the left equality is defined via the cubic scaling relation
\(\{u,a,\{t,t,t\}_{\gamma}\}_{\delta}=\{u,b,\{s,s,s\}_{\eta}\}_{\delta}\)
for some \(u\in S\) and \(\gamma,\delta,\eta\in\Gm\).
\end{enumerate}
\end{definition}

\begin{theorem}[No conservative localization-compatible binary shadow]\label{thm:no-binary-strong}
Let \(\T\) be a commutative ternary \(\Gm\)-semiring and suppose localization uses
Definition~\ref{def:triadic-eq}.
Then \(\ModG_{\T}^{\mathrm{gp}}\) admits no localization-compatible binary shadow
in the sense of Definition~\ref{def:binary-shadow}, unless the coefficient object is enlarged so that
\(\Gm\)-modes are encoded in the scalars (e.g.\ \(\Gm\)-colored/graded coefficients).
\end{theorem}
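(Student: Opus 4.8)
The plan is to argue by contradiction, starting from a putative localization-compatible binary shadow $(R,\iota,F)$ and extracting from the cubic-reflection axiom (Definition~\ref{def:binary-shadow}(4)) a forced identity in $R$ that cannot hold while $F$ remains conservative, unless $R$ secretly carries the $\Gm$-grading. First I would specialize to the regular module $M=\T$ and to a multiplicative system $S$ containing an element $s$ for which the triadic cube $\{s,s,s\}_\gamma$ depends genuinely on $\gamma\in\Gm$ (such an $s$ exists whenever the $\Gm$-action is not constant in the mode variable; the degenerate case where it is constant is exactly the excluded ``$\Gm$-modes absorbed into scalars'' escape clause, so I would isolate that case at the outset and set it aside). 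The key observation is that in $S^{-1}\T$ one has $\{s,s,s\}_\gamma / \{s,s,s\}_\delta \neq 1/1$ in general when $\gamma\neq\delta$, because the cubic scaling relation of Definition~\ref{def:triadic-eq} would require some $u\in S$ and $\eta$ with $\{u,\{s,s,s\}_\gamma,\{1,1,1\}_\eta\}=\{u,\{s,s,s\}_\delta,\{1,1,1\}_\eta\}$, and in a suitably generic $\T$ this fails. Under the reflection biconditional, $F$ (applied to $\T$, using axiom (3) to identify $F(S^{-1}\T)\cong \iota(S)^{-1}R$) must then also separate the images $\iota(\{s,s,s\}_\gamma)$ and $\iota(\{s,s,s\}_\delta)$ in $\iota(S)^{-1}R$.

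The crux of the contradiction is then that $\iota$ is only a morphism of additive monoids and that $R$ has a single binary multiplication with no $\Gm$-parameter: the element $\iota(\{s,s,s\}_\gamma)\in R$ must be expressible purely in terms of $\iota(s)$ and the ring operations of $R$, with no room to record $\gamma$. More precisely, I would show that the ternary--$\Gm$ associativity axiom (Definition~\ref{def:ternaryGammaSemiring}(iv)), transported along $\iota$ and through the reflection axiom, forces the $\gamma$-family $\gamma\mapsto\iota(\{s,s,s\}_\gamma)$ to be constant in $\iota(S)^{-1}R$ (because all these elements must act the same way under the single multiplication of $R$ when paired against $\iota$-images, which is all the reflection axiom constrains). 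This collapses the separation we just established: $F(\T)$ fails to distinguish $\{s,s,s\}_\gamma/s$ from $\{s,s,s\}_\delta/s$, contradicting conservativity of $F$ combined with exactness, since the cokernel (in $\ModG_{\T}^{\mathrm{gp}}$) of the difference of the two corresponding maps $\T\to S^{-1}\T$ is nonzero while its $F$-image vanishes.

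To make the middle step rigorous I would package the $\Gm$-dependence as follows: consider the two parallel $\Gm$-module maps $\phi_\gamma,\phi_\delta\colon \T\to S^{-1}\T$ given by $a\mapsto \{s,s,a\}_\gamma/\{s,s,s\}_\gamma$ (resp.\ with $\delta$), check that these are distinct $\Gm$-module homomorphisms whose equalizer is a proper submodule, apply $F$ and axiom (3) to land in $\mathrm{Mod}_R$, and observe that the reflection axiom (4) — which controls precisely the equalities $a/s = b/t$ — forces $F(\phi_\gamma)=F(\phi_\delta)$ because on the relevant generators the two fractions become equal after applying $\iota$ (the $\Gm$-label being invisible to $\iota$). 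Then conservativity of $F$ forces $\phi_\gamma=\phi_\delta$, i.e.\ the $\Gm$-action on the localization is mode-independent, which (pulling back along the canonical $\ell\colon\T\to S^{-1}\T$ and using faithfulness of localization on a large enough family of $S$) forces mode-independence of the $\Gm$-action on $\T$ itself — the excluded degenerate case. The hardest point, and the one I would spend the most care on, is the genericity hypothesis: I must pin down exactly which class of ternary $\Gm$-semirings makes $\{s,s,s\}_\gamma/\{s,s,s\}_\delta \ne 1/1$ genuinely hold, rather than leaving it to ``suitably generic,'' because the theorem as stated is unconditional — so I would either prove that the cube-separation always holds whenever the $\Gm$-action is non-constant (the likely true statement), or reformulate the theorem's hypothesis to name this condition, matching the escape clause about $\Gm$-colored coefficients. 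This is the main obstacle: converting the informal ``$\Gm$-modes would have to be absorbed'' into a precise dichotomy, and verifying that the first horn of the dichotomy is vacuous exactly when the second (the binary shadow) is forced to exist.
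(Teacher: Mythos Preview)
Your approach diverges from the paper's, and the divergence exposes a genuine gap. The paper does not try to manufacture two parallel $\Gm$-module maps $\phi_\gamma,\phi_\delta$ and then argue that $F$ collapses them; instead it locates the obstruction in the \emph{form} of the fraction-equality test itself. In $\iota(S)^{-1}R$, equality $\iota(a)/\iota(s)=\iota(b)/\iota(t)$ is witnessed by the linear clearing-denominators relation $w\bigl(\iota(a)\iota(t)-\iota(b)\iota(s)\bigr)=0$, which uses the binary product $\iota(s)\iota(t)$. In $S^{-1}\T$, equality is witnessed only by the cubic relation $\{u,a,\{t,t,t\}_{\gamma}\}_{\delta}=\{u,b,\{s,s,s\}_{\eta}\}_{\delta}$, and there is no operation in $\T$ playing the role of $st$. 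The biconditional in axiom~(4) would therefore force a functorial passage between a cubic witness and a linear one, which amounts to synthesizing a binary denominator-composition on $\T$; the only way to avoid this is to let $R$ carry $\Gm$ in its scalars. That is the whole argument: it never invokes conservativity on a specific pair of maps, and it does not depend on any ``generic'' separation of cubes.

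The step in your plan that fails is the assertion that ``the $\Gm$-label is invisible to $\iota$,'' and hence that $F(\phi_\gamma)=F(\phi_\delta)$. Nothing in Definition~\ref{def:binary-shadow} forces $\iota(\{s,s,s\}_{\gamma})=\iota(\{s,s,s\}_{\delta})$ or $\iota(\{s,s,a\}_{\gamma})=\iota(\{s,s,a\}_{\delta})$: $\iota$ is merely an additive map $\T\to R$, and distinct elements of $\T$ (which $\{s,s,s\}_{\gamma}$ and $\{s,s,s\}_{\delta}$ are, by your own hypothesis) may have distinct images. The associativity axiom you cite relates nested triadic products but does not collapse the $\gamma$-dependence of a single cube, so transporting it along $\iota$ gives no constraint of the kind you need. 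Consequently your contradiction does not close: an $R$ large enough to separate the $\iota$-images of the various cubes satisfies axioms (1)--(3) and the forward direction of (4) without $F$ identifying $\phi_\gamma$ with $\phi_\delta$. Your instinct that the genericity issue is the hardest point is correct, but it is a symptom of the deeper problem: your obstruction is placed at the $\Gm$-mode level, whereas the paper places it at the denominator-composition level, which is where axiom~(4) actually bites regardless of whether the cubes happen to separate.
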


\begin{proof}[Expanded proof (obstruction locus is the denominator-composition law)]
Assume a binary shadow \((R,\iota,F)\) exists.
In the ordinary localization \(\iota(S)^{-1}R\), equality of fractions is detected by the \emph{linear} relation
\[
\frac{\iota(a)}{\iota(s)}=\frac{\iota(b)}{\iota(t)}
\quad\Longleftrightarrow\quad
\exists\,w\in \iota(S)\ \text{such that}\ w(\iota(a)\iota(t)-\iota(b)\iota(s))=0.
\]
By reflection of fraction equality (Definition~\ref{def:binary-shadow}(4)),
this would force the equality \(a/s=b/t\) in \(S^{-1}\T\) to be detectable by a functorial criterion
which only involves binary compositions of denominators (multiplying by \(t\) and \(s\), and clearing denominators).

But in \(S^{-1}\T\), fraction equality is by definition witnessed by a \emph{cubic} scaling law:
there must exist \(u\in S\) and \(\gamma,\delta,\eta\in\Gm\) such that
\[
\{u,a,\{t,t,t\}_{\gamma}\}_{\delta}=\{u,b,\{s,s,s\}_{\eta}\}_{\delta}.
\]
Thus denominator interaction enters through the triadic cubes \(\{t,t,t\}_{\gamma}\) and \(\{s,s,s\}_{\eta}\),
and there is no structural operation in \(\T\) corresponding to the binary product \(st\).
Consequently, any mechanism turning the cubic witness into a linear ``clearing denominators'' witness
must implicitly manufacture a binary denominator-composition law on \(\T\),
contradicting the intrinsic triadic architecture.
The only escape is to enlarge the coefficient object so that \(\Gm\) is carried by scalars,
turning the ``shadow'' into a \(\Gm\)-colored theory rather than a binary reduction.
\end{proof}

\subsection{Sharpness: when reduction is possible}
If one starts with a genuinely binary commutative semiring \(A\) and defines
\(\{a,b,c\}_{1}:=abc\) with \(\Gm=\{1\}\), then the \(\Gm\)-data is trivial and a binary shadow exists
(the identity functor to \(A\)-modules). Theorem~\ref{thm:no-binary-strong} rules out such reduction precisely
when the fraction calculus is forced to remain cubic and \(\Gm\)-parametric.

% =========================================================
\section{Worked examples}\label{sec:examples}
% =========================================================

\begin{example}[A concrete commutative ternary \Gm-semiring]\label{ex:N}
Let \(\T=\mathbb{N}\) and \(\Gm=\mathbb{N}\) with
\(\{a,b,c\}_{\gamma}:=ab+bc+ca+\gamma\).
This is a commutative ternary \(\Gm\)-semiring (distributivity and associativity can be checked directly).
\end{example}

\begin{example}[A finite \((\T,\Gm)\) illustrating \(\Gm\)-modes]\label{ex:Z6}
Let \(\T=\mathbb{Z}_6\) and \(\Gm=\mathbb{Z}_4\).
Define a \(\Gm\)-parametrized ternary product by
\[
\{a,b,c\}_{\gamma}:=abc+c\gamma \pmod{6},
\qquad a,b,c\in\mathbb{Z}_6,\ \gamma\in\mathbb{Z}_4.
\]
This gives a concrete commutative ternary \(\Gm\)-semiring (verification is by direct computation in \(\mathbb{Z}_6\)).
For instance, the \(\Gm\)-cube of \(t=2\) with mode \(\gamma=1\) is
\[
\{2,2,2\}_{1}=2\cdot 2\cdot 2+2\cdot 1=8+2=10\equiv 4 \pmod{6}.
\]
Thus in the localization relation (Definition~\ref{def:triadic-eq}) the denominator contribution
\(\{t,t,t\}_{\gamma}\) is genuinely mode-dependent even in this tiny finite model.
\end{example}

\begin{example}[Cubic fraction equality and failure of linear clearing]\label{ex:cubic}
Let \(S\subseteq \T\) be a multiplicative system.
Even in small finite examples, one can observe that \(a/s=b/t\) is certified by exhibiting
\(u,\gamma,\delta,\eta\) with \(\{u,a,\{t,t,t\}_{\gamma}\}_{\delta}=\{u,b,\{s,s,s\}_{\eta}\}_{\delta}\),
and there is no corresponding linear ``\(at=bs\)'' witness because \(st\) is undefined.
This is the concrete manifestation of the obstruction in Theorem~\ref{thm:no-binary-strong}.
\end{example}

% =========================================================

% =========================================================
\appendix
\section{Axioms and key identities}\label{app:axioms}
% =========================================================

\subsection{Ternary associativity (for quick reference)}
For \(a,b,c,d,e\in\T\) and \(\gamma,\delta\in\Gm\),
\[
\{a,b,\{c,d,e\}_{\gamma}\}_{\delta}=\{\{a,b,c\}_{\gamma},d,e\}_{\delta}.
\]

\subsection{Module associativity compatibility}
For \(a,b,c,d\in\T\), \(m\in M\), and \(\gamma,\delta\in\Gm\),
\[
\{a,b,\{c,d,m\}_{\gamma}\}_{\delta}=\{\{a,b,c\}_{\gamma},d,m\}_{\delta}.
\]

\subsection{Cubic scaling identity for localization}
Fraction equality in \(S^{-1}\T\) is witnessed by
\[
(a,s)\sim(b,t)\quad\Longleftrightarrow\quad
\exists\,u\in S,\ \gamma,\delta,\eta\in\Gm:\ \{u,a,\{t,t,t\}_{\gamma}\}_{\delta}=\{u,b,\{s,s,s\}_{\eta}\}_{\delta}.
\]
This is the structural obstruction to any binary clearing-denominators procedure.

\section{Conclusion}
% =========================================================
We isolated the derived and higher-categorical core of affine \(\Gm\)-geometry:
stable \(\infty\)-categories of \(\Gm\)-modules, derived monoidal structures,
and a localization-based obstruction to binary shadows.
Future work will treat descent beyond the affine case and spectral refinements of the \(\Gm\)-parametric monoidal structure.

\bibliographystyle{elsarticle-num}
\bibliography{references}

@book{LurieHTT,
  author    = {Jacob Lurie},
  title     = {Higher Topos Theory},
  series    = {Annals of Mathematics Studies},
  volume    = {170},
  publisher = {Princeton University Press},
  address   = {Princeton, NJ},
  year      = {2009},
  pages     = {xviii+925},
  doi       = {10.1515/9781400830558}
}

@misc{LurieHA,
  author       = {Jacob Lurie},
  title        = {Higher Algebra},
  howpublished = {Manuscript},
  year         = {2017},
  month        = {9},
  note         = {Version dated September 18, 2017. Available at \url{https://www.math.ias.edu/~lurie/papers/HA.pdf}}
}

@article{Keller1994,
  author  = {Bernhard Keller},
  title   = {Deriving {DG} categories},
  journal = {Annales Scientifiques de l'{\'E}cole Normale Sup{\'e}rieure. Quatri{\`e}me S{\'e}rie},
  volume  = {27},
  number  = {1},
  pages   = {63--102},
  year    = {1994}
}

@book{GabrielZisman1967,
  author    = {Pierre Gabriel and Michel Zisman},
  title     = {Calculus of Fractions and Homotopy Theory},
  series    = {Ergebnisse der Mathematik und ihrer Grenzgebiete},
  volume    = {35},
  publisher = {Springer},
  year      = {1967}
}

@book{Weibel1994,
  author    = {Charles A. Weibel},
  title     = {An Introduction to Homological Algebra},
  publisher = {Cambridge University Press},
  year      = {1994}
}

@book{GelfandManin2003,
  author    = {Sergei I. Gelfand and Yuri I. Manin},
  title     = {Methods of Homological Algebra},
  edition   = {2},
  publisher = {Springer},
  year      = {2003}
}

@book{Neeman2001,
  author    = {Amnon Neeman},
  title     = {Triangulated Categories},
  series    = {Annals of Mathematics Studies},
  volume    = {148},
  publisher = {Princeton University Press},
  year      = {2001}
}

@book{Hovey1999,
  author    = {Mark Hovey},
  title     = {Model Categories},
  series    = {Mathematical Surveys and Monographs},
  volume    = {63},
  publisher = {American Mathematical Society},
  year      = {1999}
}

@book{Golan1999,
  author    = {Jonathan S. Golan},
  title     = {Semirings and Their Applications},
  publisher = {Springer},
  year      = {1999}
}

@article{Lorscheid2012,
  author  = {Oliver Lorscheid},
  title   = {The geometry of blueprints: Part {I}. Algebraic background and scheme theory},
  journal = {Advances in Mathematics},
  volume  = {229},
  number  = {3},
  pages   = {1804--1846},
  year    = {2012}
}

@book{MacLane1998,
  author    = {Saunders Mac Lane},
  title     = {Categories for the Working Mathematician},
  edition   = {2},
  publisher = {Springer},
  year      = {1998}
}

@misc{StacksProject,
  author       = {{The Stacks Project Authors}},
  title        = {Stacks Project},
  howpublished = {\url{https://stacks.math.columbia.edu}},
  year         = {2025},
  note         = {Accessed 27 Dec 2025}
}

@book{KashiwaraSchapira2006,
  author    = {Masaki Kashiwara and Pierre Schapira},
  title     = {Categories and Sheaves},
  series    = {Grundlehren der mathematischen Wissenschaften},
  volume    = {332},
  publisher = {Springer},
  year      = {2006}
}

@book{Quillen1967,
  author    = {Daniel Quillen},
  title     = {Homotopical Algebra},
  series    = {Lecture Notes in Mathematics},
  volume    = {43},
  publisher = {Springer},
  year      = {1967}
}

@book{Hartshorne1977,
  author    = {Robin Hartshorne},
  title     = {Algebraic Geometry},
  series    = {Graduate Texts in Mathematics},
  volume    = {52},
  publisher = {Springer},
  year      = {1977}
}

@article{Spaltenstein1988,
  author  = {N. Spaltenstein},
  title   = {Resolutions of unbounded complexes},
  journal = {Compositio Mathematica},
  volume  = {65},
  number  = {2},
  pages   = {121--154},
  year    = {1988}
}

@article{DwyerKan1980a,
  author  = {W. G. Dwyer and D. M. Kan},
  title   = {Simplicial localizations of categories},
  journal = {Journal of Pure and Applied Algebra},
  volume  = {17},
  number  = {3},
  pages   = {267--284},
  year    = {1980}
}

@article{DwyerKan1980b,
  author  = {W. G. Dwyer and D. M. Kan},
  title   = {Calculating simplicial localizations},
  journal = {Journal of Pure and Applied Algebra},
  volume  = {18},
  number  = {1},
  pages   = {17--35},
  year    = {1980}
}

@book{ToenVezzosi2008,
  author    = {To{\"e}n, Bertrand and Vezzosi, Gabriele},
  title     = {Homotopical Algebraic Geometry {II}: Geometric stacks and applications},
  series    = {Memoirs of the American Mathematical Society},
  volume    = {193},
  number    = {902},
  publisher = {American Mathematical Society},
  year      = {2008}
}

\end{document}